\newtheorem{thm}{Theorem}[section]
\newtheorem{lem}[thm]{Lemma}
\newtheorem{prop}[thm]{Proposition}
\newtheorem{defi}[thm]{Definition}
\newtheorem{rmk}[thm]{Remark}
\newcommand{\stablehomotopy}{\mathcal{SH}}
\newcommand{\spherespectrum}{\mathbf{1}}
\newcommand{\Hom}{\mathrm{Hom}}
\numberwithin{equation}{section}
\begin{document}


\title{On the Orientability of the Slice Filtration}


\author{Pablo Pelaez}
\address{Universit\"at Duisburg-Essen, Mathematik, 45117 Essen, Germany}
\email{pablo.pelaez@uni-due.de}


\subjclass[2000]{Primary 14, 55}

\keywords{Algebraic Cobordism, $K$-theory, Mixed Motives, 
					Oriented Cohomology Theories, 
					Rigid Homotopy Groups, Slice Filtration, Transfers}


\begin{abstract}
	Let $X$ be a Noetherian separated scheme of finite Krull
	dimension.  We show that the layers of the slice filtration
	in the motivic stable homotopy category $\stablehomotopy$
	are strict modules over Voevodsky's algebraic cobordism spectrum.  
	We also show that the zero slice of any commutative ring spectrum in $\stablehomotopy$
	is an oriented ring spectrum in the sense of Morel, and that its associated
	formal group law is additive.  As a consequence, we get
	that with rational coefficients the slices are in fact
	motives in the sense of Cisinski-D{\'e}glise \cite{mixedmotives}, and 
	have transfers if the
	base scheme is excellent.
	This proves a conjecture
	of Voevodsky \cite[conjecture 11]{MR1977582}.
\end{abstract}


\maketitle


\section{Introduction}
		\label{Introd}
		
	Let $X$ be a Noetherian separated scheme of finite Krull dimension, and
$\mathcal M _{X}$ be
the category of pointed simplicial presheaves in the smooth Nisnevich site $Sm_{X}$ over $X$
equipped with the Quillen model structure \cite{MR0223432} introduced by Morel-Voevodsky \cite{MR1813224}.
We define $T$ in $\mathcal M _{X}$ as  the pointed simplicial presheaf represented by
$S^{1}\wedge \mathbb G _{m}$, where $\mathbb G _{m}$ is the multiplicative group $\mathbb A ^{1}_{X}- \{ 0 \}$ pointed by $1$,
and $S^{1}$ denotes the simplicial circle.  Let
$Spt(\mathcal M _{X})$ denote the category
of symmetric $T$-spectra on $\mathcal M _{X}$
equipped with Jardine's motivic model structure \cite{MR1787949}.  The homotopy category of $Spt (\mathcal M _{X})$
is a triangulated category which will be denoted by $\stablehomotopy$.

Given an integer $q\in \mathbb Z$, we consider the following family of symmetric $T$-spectra
	$$C^{q}_{eff}=\{ F_{n}(S^{r}\wedge \mathbb G _{m}^{s}\wedge U_{+}) \mid n,r,s \geq 0; s-n\geq q; U\in Sm_{X}\}
	$$
where $F_{n}$ is the left adjoint to the $n$-evaluation functor
	$$ev_{n}:Spt(\mathcal M _{X}) \rightarrow \mathcal M _{X}
	$$

Voevodsky \cite{MR1977582} defines the slice filtration as the following family of triangulated subcategories of $\stablehomotopy$
	$$\cdots \subseteq \Sigma _{T}^{q+1}\stablehomotopy ^{eff} \subseteq \Sigma _{T}^{q}\stablehomotopy ^{eff}
		\subseteq \Sigma _{T}^{q-1}\stablehomotopy ^{eff} \subseteq \cdots
	$$
where $\Sigma _{T}^{q}\stablehomotopy^{eff}$ is the smallest full triangulated subcategory of $\stablehomotopy$ which contains
$C^{q}_{eff}$ and is closed under arbitrary coproducts.

It follows from the work of Neeman \cite{MR1308405}, \cite{MR1812507} that the inclusion
	$$i_{q}:\Sigma _{T}^{q}\stablehomotopy ^{eff}\rightarrow \stablehomotopy
	$$
has a right adjoint $r_{q}:\stablehomotopy\rightarrow \Sigma _{T}^{q}\stablehomotopy ^{eff}$, and that the following functors
	$$f_{q}:\stablehomotopy \rightarrow \stablehomotopy$$
	$$s_{q}:\stablehomotopy \rightarrow \stablehomotopy$$
are triangulated, where $f_{q}$ is defined as the composition $i_{q}\circ r_{q}$, and $s_{q}$ is characterized by the fact
that for every $E\in Spt(\mathcal M _{X})$, we have the following distinguished triangle in $\stablehomotopy$
	$$\xymatrix{f_{q+1}E \ar[r]^-{\rho _{q}^{E}}& f_{q}E \ar[r]^-{\pi _{q}^{E}}& s_{q}E \ar[r]& \Sigma _{T}^{1,0}f_{q+1}E}
	$$
We will refer to $f_{q}E$ as the $(q-1)$-connective cover of $E$, and to $s_{q}E$ as the $q$-slice of $E$.
It follows directly from the construction that the $q$-slice of $E$ is right orthogonal with respect to
$\Sigma _{T}^{q+1}\stablehomotopy ^{eff}$, i.e.
	$$\Hom _{\stablehomotopy}(K, s_{q}E)=0
	$$
for every $K$ in $\Sigma _{T}^{q+1}\stablehomotopy^{eff}$.

\section{Strict $MGL$-modules}
		\label{sect-1}

In this section we will show that all the slices have a canonical structure of strict modules
in $Spt (\mathcal M _{X})$ over Voevodsky's algebraic cobordism spectrum.

Let $A$ be a cofibrant ring spectrum with unit in $Spt (\mathcal M _{X})$, and $A\text{-} \mathrm{mod}$ be the category
of left $A$-modules in $Spt (\mathcal M _{X})$.  The work of Jardine \cite[proposition 4.19]{MR1787949}
and Hovey \cite[corollary 2.2]{Hovey:fk} implies that the adjunction
	$$(A\wedge -,U,\varphi): Spt (\mathcal M _{X}) \rightarrow A\text{-} \mathrm{mod}
	$$
induces a Quillen model structure $Spt^{A} (\mathcal M _{X})$ in $A\text{-} \mathrm{mod}$, this means that
a map $f:M\rightarrow N$ in $Spt^{A} (\mathcal M _{X})$ is a weak equivalence or a fibration if and only if
$Uf$ is a weak equivalence or a fibration in $Spt (\mathcal M _{X})$.

It is easy to see that the homotopy category $\stablehomotopy^{A}$ of $Spt^{A} (\mathcal M _{X})$ is
a triangulated category \cite[proposition 3.5.3]{thesis}.

\begin{defi}
		\label{def.effective}
	Let $E$ be a spectrum in $\stablehomotopy$.  We say that $E$ is \emph{effective}
	if $E$ belongs to the triangulated category $\Sigma _{T}^{0}\stablehomotopy ^{eff}$ defined above.
\end{defi}

\begin{thm}
		\label{thm.changecoeffs}
	Let $A$ be an effective cofibrant ring spectrum with unit $u^{A}:\spherespectrum \rightarrow A$
	in $Spt (\mathcal M _{X})$.
	If $s_{0}(u^{A})$ is an isomorphism in $\stablehomotopy$, then for every $q\in \mathbb Z$
	the functor
		$$s_{q}:\stablehomotopy \rightarrow \stablehomotopy
		$$
	factors (up to a canonical isomorphism) through $\stablehomotopy^{A}$ 
		$$\xymatrix{\stablehomotopy \ar[r]^-{s_{q}} \ar@{-->}[dr]_-{\tilde{s}_{q}}& \stablehomotopy \\
								& \stablehomotopy ^{A} \ar[u]_-{UR^{A}}}
		$$
	where $R^{A}$ denotes a fibrant replacement functor in $Spt^{A} (\mathcal M _{X})$.
\end{thm}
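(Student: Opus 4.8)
The plan is to realize $s_{q}E$, for every $E$, as the image under $UR^{A}$ of a canonical object of $\stablehomotopy^{A}$ --- namely the $q$-slice, computed inside the module category, of the free $A$-module on the $(q-1)$-connective cover $f_{q}E$ --- and the two hypotheses on $A$ are precisely what is needed to identify that image with $s_{q}E$ itself. First I would equip $\stablehomotopy^{A}$ with its own slice filtration: let $\stablehomotopy^{A}_{q}$ be the smallest full triangulated subcategory of $\stablehomotopy^{A}$ containing $\{A\wedge C\mid C\in C^{q}_{eff}\}$ and closed under arbitrary coproducts. Since $Spt^{A}(\mathcal M_{X})$ is compactly generated, the results of Neeman invoked in the introduction apply verbatim and produce a right adjoint to the inclusion $\stablehomotopy^{A}_{q}\hookrightarrow\stablehomotopy^{A}$, hence triangulated functors $f^{A}_{q}$ and $s^{A}_{q}$ on $\stablehomotopy^{A}$ sitting in the analogous distinguished triangles.

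The next step is to compare these with $f_{q}$ and $s_{q}$ through the adjunction $(A\wedge-,U)$. On the one hand the free functor $A\wedge-$ sends $C^{q}_{eff}$ into the chosen generators of $\stablehomotopy^{A}_{q}$, hence sends $\Sigma^{q}_{T}\stablehomotopy^{eff}$ into $\stablehomotopy^{A}_{q}$. On the other hand --- and this is where effectivity of $A$ is used --- for $C\in C^{q}_{eff}$ one has $A\wedge C\in\Sigma^{q}_{T}\stablehomotopy^{eff}$, by reducing to the generators and using that the full subcategory of effective spectra is closed under smash products; as $U$ is triangulated and preserves coproducts, it follows that $U$ sends $\stablehomotopy^{A}_{q}$ into $\Sigma^{q}_{T}\stablehomotopy^{eff}$. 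Now for $M\in\stablehomotopy^{A}$, apply $U$ to the colocalization triangle $f^{A}_{q}M\to M\to Q$ (with $Q$ right orthogonal to $\stablehomotopy^{A}_{q}$): the first term lies in $\Sigma^{q}_{T}\stablehomotopy^{eff}$, while $UQ$ is right orthogonal to $\Sigma^{q}_{T}\stablehomotopy^{eff}$ because $\Hom_{\stablehomotopy}(K,UQ)\cong\Hom_{\stablehomotopy^{A}}(A\wedge K,Q)=0$ for $K\in\Sigma^{q}_{T}\stablehomotopy^{eff}$. Hence $Uf^{A}_{q}M\to UM$ is the colocalization of $UM$, so $Uf^{A}_{q}\cong f_{q}U$ and therefore $Us^{A}_{q}\cong s_{q}U$, naturally; since $U$ preserves weak equivalences, $UR^{A}$ coincides with the functor induced by $U$ on homotopy categories.

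The remaining input is the natural isomorphism $s_{q}(A\wedge f_{q}E)\cong s_{q}E$. Let $\bar A$ be the cofiber of $u^{A}\colon\spherespectrum\to A$; it is effective (both $\spherespectrum$ and $A$ are), and since $s_{0}(u^{A})$ is an isomorphism we get $s_{0}\bar A=0$, which forces $\bar A\in\Sigma^{1}_{T}\stablehomotopy^{eff}$. Consequently $\bar A\wedge f_{q}E\in\Sigma^{q+1}_{T}\stablehomotopy^{eff}$ (again using closure of effective spectra under smash), so $s_{q}(\bar A\wedge f_{q}E)=0$; applying $s_{q}$ to the triangle $f_{q}E\to A\wedge f_{q}E\to\bar A\wedge f_{q}E$ shows that $s_{q}(u^{A}\wedge f_{q}E)$ is an isomorphism. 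Composing with the standard isomorphism $s_{q}f_{q}E\cong s_{q}E$ induced by the counit $f_{q}E\to E$ --- an isomorphism on $m$-slices for all $m\geq q$, since its cofiber is right orthogonal to $\Sigma^{q}_{T}\stablehomotopy^{eff}$ --- gives the claim. Setting $\tilde s_{q}:=s^{A}_{q}\circ(A\wedge-)\circ f_{q}\colon\stablehomotopy\to\stablehomotopy^{A}$ (a composite of triangulated functors), the comparison of the previous paragraph yields $UR^{A}\tilde s_{q}E\cong Us^{A}_{q}(A\wedge f_{q}E)\cong s_{q}(A\wedge f_{q}E)\cong s_{q}E$, naturally in $E$; the composite of these natural isomorphisms is the asserted canonical isomorphism $UR^{A}\tilde s_{q}\cong s_{q}$.

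I expect the main obstacle to be the second paragraph: installing the slice filtration on $A$-modules and showing that the forgetful functor intertwines the two colocalizations --- in particular verifying that $Spt^{A}(\mathcal M_{X})$ is compactly generated so that Neeman's theorem is available, and keeping the adjunction $(A\wedge-,U)$ under control through the orthogonality argument. The rest is either formal or the short ``effective cover'' computation of the third paragraph, which is exactly where the two hypotheses ``$A$ effective'' and ``$s_{0}(u^{A})$ an isomorphism'' come in.
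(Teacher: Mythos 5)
Your argument is correct and is essentially the one underlying the results the paper cites for this theorem: build the slice filtration on $A$-modules from the free modules $A\wedge C$ with $C\in C^{q}_{eff}$, use effectivity of $A$ and the free--forgetful adjunction to show $U$ intertwines the two colocalizations (so $Us^{A}_{q}\cong s_{q}U$), and use that $s_{0}(u^{A})$ being an isomorphism forces the cofiber of $u^{A}$ into $\Sigma^{1}_{T}\stablehomotopy^{eff}$, whence $s_{q}(A\wedge f_{q}E)\cong s_{q}E$. The paper gives no argument of its own, merely citing \cite[theorem 3.6.20 and lemma 3.6.21]{thesis} and \cite[theorem 2.1(vi)]{MR2576905}, where this same construction is carried out (at the level of model structures rather than only triangulated categories).
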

\begin{proof}
	This follows directly from \cite[theorem 3.6.20 and lemma 3.6.21]{thesis} or 
	\cite[theorem 2.1(vi)]{MR2576905}.
\end{proof}
	
The following proposition was proved by Voevodsky  \cite[p. 10 section 3.4]{MR1977582}
and Spitzweck \cite[corollaries 3.2 and 3.3]{slice-spitzweck}.

\begin{prop}[Voevodsky]
		\label{prop.unitMGL-iso}
	Let $MGL$ denote Voevodsky's algebraic cobordism spectrum \cite{MR1648048}.
	We have that $MGL$ is effective and its unit map $u^{MGL}:\spherespectrum \rightarrow MGL$
	induces an isomorphism on the zero slices in $\stablehomotopy$
		$$\xymatrix{s_{0}(u^{MGL}):s_{0}\spherespectrum \ar[r]^-{\cong}& s_{0}MGL}
		$$
\end{prop}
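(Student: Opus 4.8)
The plan is to exhibit a cellular filtration of $MGL$ whose bottom layer is the sphere spectrum, included via the unit, and whose higher layers are coproducts of $T$-suspensions $\Sigma_{T}^{d}\spherespectrum$ with $d\geq 1$; the statement about zero slices is then a formal consequence, and effectivity I would establish separately by a Zariski-descent argument.

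Recall that $MGL$ is, by construction \cite{MR1648048}, the symmetric $T$-spectrum whose $n$-th space is the Thom space $\mathrm{Th}(\gamma_{n})$ of the tautological rank-$n$ bundle over the infinite Grassmannian $BGL_{n}=\mathrm{colim}_{k}\mathrm{Gr}(n,n+k)$, with structure maps induced by the classifying maps of $\gamma_{n}\oplus\mathcal{O}$; in particular $MGL\simeq\mathrm{hocolim}_{n}F_{n}(\mathrm{Th}(\gamma_{n}))$. For effectivity I would use that $\Sigma_{T}^{\infty}\mathrm{Th}(V)$ lies in $\Sigma_{T}^{n}\stablehomotopy^{eff}$ for every rank-$n$ bundle $V$ over a smooth scheme: by Zariski descent, and the closure of $\Sigma_{T}^{n}\stablehomotopy^{eff}$ under homotopy colimits, this reduces to $V=\mathcal{O}^{n}$, where $\mathrm{Th}(\mathcal{O}^{n}\to U)=T^{\wedge n}\wedge U_{+}$ is patently $n$-effective. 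Since $\Sigma_{T}^{n}F_{n}(Y)\simeq F_{0}(Y)=\Sigma_{T}^{\infty}Y$ in $\stablehomotopy$ and $\Sigma_{T}^{n}\stablehomotopy^{eff}=T^{\wedge n}\wedge\Sigma_{T}^{0}\stablehomotopy^{eff}$, each $F_{n}(\mathrm{Th}(\gamma_{n}))$ is effective, and hence so is the homotopy colimit $MGL$.

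For the isomorphism on zero slices I would use the Schubert-cell stratification of the Grassmannians $\mathrm{Gr}(n,n+k)$, whose strata are affine spaces indexed by partitions with a single $0$-dimensional stratum, to build compatible cell filtrations of the Thom spaces $\mathrm{Th}(\gamma_{n})$ — a stratum $\cong\mathbb{A}^{d}$ contributing, through the Thom isomorphism over it, a cell $T^{\wedge(n+d)}$ — and hence a filtration $\mathrm{sk}_{0}MGL\to\mathrm{sk}_{1}MGL\to\cdots$ with $MGL=\mathrm{hocolim}_{d}\mathrm{sk}_{d}MGL$ and layers $\mathrm{sk}_{d}MGL/\mathrm{sk}_{d-1}MGL\simeq\bigvee\Sigma_{T}^{d}\spherespectrum$, using $F_{n}(T^{\wedge(n+d)}\wedge Z)\simeq\Sigma_{T}^{d}\Sigma_{T}^{\infty}Z$. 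The $0$-dimensional Schubert strata in the various levels assemble, via the structure maps, into $\mathrm{sk}_{0}MGL\cong\spherespectrum$; as this is already present in level $0$, where $MGL_{0}=S^{0}$, the inclusion $\mathrm{sk}_{0}MGL\to MGL$ is adjoint to $\mathrm{id}_{S^{0}}$ and is therefore the unit $u^{MGL}$. Consequently $C:=\mathrm{cofib}(u^{MGL})\simeq\mathrm{hocolim}_{d\geq 1}\big(\mathrm{sk}_{d}MGL/\mathrm{sk}_{0}MGL\big)$ is an iterated extension of coproducts of $\Sigma_{T}^{d}\spherespectrum$ with $d\geq 1$, each lying in $\Sigma_{T}^{d}\stablehomotopy^{eff}\subseteq\Sigma_{T}^{1}\stablehomotopy^{eff}$; since $\Sigma_{T}^{1}\stablehomotopy^{eff}$ is triangulated and closed under arbitrary coproducts, hence under homotopy colimits, we get $C\in\Sigma_{T}^{1}\stablehomotopy^{eff}$.

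To conclude, note that if $E\in\Sigma_{T}^{1}\stablehomotopy^{eff}$ then the maps $f_{1}E\to E$ and $f_{0}E\to E$ — the counits of the adjunctions $(i_{1},r_{1})$ and $(i_{0},r_{0})$ evaluated at objects of the respective subcategories (recall $\Sigma_{T}^{1}\stablehomotopy^{eff}\subseteq\Sigma_{T}^{0}\stablehomotopy^{eff}$) — are isomorphisms; hence $\rho_{0}^{E}\colon f_{1}E\to f_{0}E$ is an isomorphism and the distinguished triangle $f_{1}E\to f_{0}E\to s_{0}E\to\Sigma_{T}^{1,0}f_{1}E$ gives $s_{0}E=0$. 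Applying the triangulated functor $s_{0}$ to the triangle $\spherespectrum\to MGL\to C\to\Sigma_{T}^{1,0}\spherespectrum$ and using $s_{0}C=0$ (the case $E=C$) shows that $s_{0}(u^{MGL})$ is an isomorphism. I expect the main obstacle to be the construction in the previous paragraph: promoting the Schubert-cell decomposition of the spaces $\mathrm{Th}(\gamma_{n})$ to a genuine cell filtration of the \emph{symmetric $T$-spectrum} $MGL$ with controlled layers $\Sigma_{T}^{d}\spherespectrum$ (with $d\geq 1$ beyond the bottom cell) — the bookkeeping of how the functors $F_{n}$ interact with the stratification — together with the compatibility, via compactness arguments in the style of Neeman, of the slice functors with the homotopy colimits involved.
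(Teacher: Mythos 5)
The paper offers no proof of this proposition: it is stated as a known result and attributed to Voevodsky \cite[p.~10, section 3.4]{MR1977582} and Spitzweck \cite[corollaries 3.2 and 3.3]{slice-spitzweck}. Your argument is essentially a reconstruction of the proof given in those references --- a cell filtration of $MGL$ built from the Schubert stratification of the Grassmannians, with bottom cell $\spherespectrum$ attached along the unit and all higher cells of the form $\Sigma_{T}^{d}\spherespectrum$ with $d\geq 1$, so that the cofibre of $u^{MGL}$ lies in $\Sigma_{T}^{1}\stablehomotopy^{eff}$ and has vanishing zero slice. The one genuinely technical point, which you correctly flag, is promoting the levelwise Schubert-cell decompositions to a filtration of the symmetric $T$-spectrum with the stated subquotients; that is precisely the content supplied by the cited sources, and the rest of your deduction (effectivity via Thom spaces, and $s_{0}E=0$ for $E\in\Sigma_{T}^{1}\stablehomotopy^{eff}$) is correct.
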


Now we can state the main result of this section.

\begin{thm}	
		\label{thm.orientability}
	Let $q \in \mathbb Z$ denote an arbitrary integer and $E$ denote
	an arbitrary symmetric $T$-spectrum in $Spt (\mathcal M _{X})$. 
	We have that the $q$-slice $s_{q}E$ of $E$ is equipped with a canonical structure of
	$MGL$-module in $Spt (\mathcal M _{X})$.  This implies that
	over any base scheme, the slices are always oriented cohomology theories
	in the sense of D\'eglise \cite[example 2.12(2)]{MR2466188}.
\end{thm}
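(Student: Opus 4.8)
The plan is to leverage Theorem \ref{thm.changecoeffs} together with Proposition \ref{prop.unitMGL-iso}. Since $MGL$ is an effective cofibrant ring spectrum with unit $u^{MGL}:\spherespectrum \rightarrow MGL$ and $s_{0}(u^{MGL})$ is an isomorphism in $\stablehomotopy$ by Proposition \ref{prop.unitMGL-iso}, the hypotheses of Theorem \ref{thm.changecoeffs} are satisfied with $A=MGL$ (one first replaces $MGL$ by a cofibrant model as a ring spectrum, which is harmless). Hence for every $q\in\mathbb Z$ the functor $s_{q}:\stablehomotopy \rightarrow \stablehomotopy$ factors, up to canonical isomorphism, as $UR^{MGL}\circ \tilde{s}_{q}$, where $\tilde{s}_{q}:\stablehomotopy \rightarrow \stablehomotopy^{MGL}$ lands in the homotopy category of $MGL$-modules and $R^{MGL}$ is a fibrant replacement functor in $Spt^{MGL}(\mathcal M_{X})$.

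First I would make this factorization precise at the point-set level: for a given $E\in Spt(\mathcal M_{X})$, the object $\tilde{s}_{q}E$ is represented by an actual $MGL$-module $M$ in $A\text{-}\mathrm{mod}$, and the canonical isomorphism of Theorem \ref{thm.changecoeffs} identifies $s_{q}E$ in $\stablehomotopy$ with $UM$ (after a fibrant replacement). Transporting the $MGL$-module structure on $M$ through this isomorphism equips $s_{q}E$ with the asserted canonical $MGL$-module structure in $Spt(\mathcal M_{X})$; canonicity follows from the canonicity of both the slice functor and the factorization in Theorem \ref{thm.changecoeffs}. The second assertion is then essentially formal: an $MGL$-module structure on a spectrum produces, via the unit and multiplication of $MGL$, the Thom classes and the projective bundle / first Chern class structure, so the cohomology theory represented by $s_{q}E$ receives an orientation in the sense of D\'eglise; I would simply cite \cite[example 2.12(2)]{MR2466188} for the translation between module structures over an oriented ring spectrum and orientations of the associated cohomology theory, noting that $MGL$ is itself oriented.

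The main obstacle is bookkeeping rather than conceptual: one must check that the canonical isomorphism $s_{q}E \cong UR^{MGL}\tilde{s}_{q}E$ supplied by Theorem \ref{thm.changecoeffs} is compatible with enough structure to transport the module action, and that the resulting $MGL$-module structure does not depend on the auxiliary choices (the cofibrant model of $MGL$, the fibrant replacement $R^{MGL}$). I expect this to be handled by the naturality statements already contained in \cite[theorem 3.6.20 and lemma 3.6.21]{thesis} (equivalently \cite[theorem 2.1]{MR2576905}), so the proof reduces to assembling these citations. A secondary point to verify is that the forgetful functor $U:\stablehomotopy^{MGL}\rightarrow\stablehomotopy$ is triangulated and conservative on the relevant objects, which is standard for model categories of modules.
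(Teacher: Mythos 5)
Your argument is exactly the paper's: the author also deduces the theorem immediately from Theorem \ref{thm.changecoeffs} applied to $A=MGL$, with the hypotheses (effectivity and $s_{0}(u^{MGL})$ being an isomorphism) supplied by Proposition \ref{prop.unitMGL-iso}. Your additional bookkeeping about transporting the module structure along the canonical isomorphism and the appeal to D\'eglise for the orientation is just a fleshed-out version of the same one-line proof.
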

\begin{proof}
	This follows immediately from theorem \ref{thm.changecoeffs}
	and proposition \ref{prop.unitMGL-iso}.
\end{proof}

\begin{rmk}
		\label{rmk.consequences-orient}
	One of the interesting consequences of theorem \ref{thm.orientability}
	is the fact that over any Noetherian separated base scheme of finite
	Krull dimension,
	once we pass to the slices 
	it is possible to apply all the formalism
	developed by D\'eglise in \cite{MR2466188},
	e.g. Chern classes and the Gysin triangle.
\end{rmk}

\section{Oriented Ring Spectra and Formal Group Laws}
		\label{sect-2}
		
In this section we will show that given a commutative ring spectrum $E$ in $\stablehomotopy$, its zero slice $s_{0}E$
is an oriented ring spectrum (in the sense of Morel \cite[definition 3.1]{MR1876212})
with additive formal group law
in $\stablehomotopy$.  To simplify the notation we will denote by $\mathbb P ^{n}$ the trivial projective bundle of
rank $n$ over our base scheme $X$.

\begin{defi}
		\label{def.orientedringspectra}
	Let $E$ be a commutative ring spectrum in $\stablehomotopy$
	with unit 
		$$u^{E}:\spherespectrum \rightarrow E$$
	We say that $E$ is an \emph{oriented ring spectrum} if there exists
	an element $x_{E}$ in $\Hom _{\stablehomotopy}(F_{0}(\mathbb P ^{\infty}), S^{1}\wedge \mathbb G _{m}\wedge E)$,
	where $\mathbb P ^{\infty}$ is the colimit of the diagram
		$$\mathbb P ^{1}\rightarrow \mathbb P ^{2}\rightarrow \cdots \rightarrow \mathbb P ^{n} \rightarrow \cdots
		$$
	given by the inclusions of the respective hyperplanes at infinity, such that $x_{E}$ pulls back to
	the following composition
		$$\xymatrix{F_{0}(\mathbb P ^{1})\cong
					F_{0}(S^{1}\wedge \mathbb G _{m}) \ar[rr]^-{id \wedge u^{E}} &&
					F_{0}(S^{1}\wedge \mathbb G_{m})\wedge E \cong S^{1}\wedge \mathbb G_{m}\wedge E}
		$$
\end{defi}

The following proposition is classical.

\begin{prop}[cf. \cite{MR0253350}, \cite{MR0290382}, \cite{MR1876212}]
		\label{prop.FGL}
	Let $(E, x_{E})$ be an oriented ring spectrum in $\stablehomotopy$, and let
	$m:\mathbb P ^{\infty} \times \mathbb P ^{\infty}\rightarrow \mathbb P ^{\infty}$
	be the map induced by the corresponding Segre embeddings.  The pullback of $x_{E}$ along $m$,
	is a formal group law $F_{E}$
		$$F_{E}=\sum _{i+j\geq 1}c_{ij}x^{i}y^{j}
		$$
	where the coefficients $c_{ij}$ are elements in the abelian group
		$$\Hom _{\stablehomotopy}(F_{0}(S^{i+j-1}\wedge \mathbb G_{m}^{i+j-1}),E)
		$$
	and $x$ (resp. $y$) is the pullback of $x_{E}$ along the projection in the first factor 
	$p_{1}:\mathbb P ^{\infty} \times \mathbb P ^{\infty}\rightarrow \mathbb P ^{\infty}$
	(resp. second factor).
\end{prop}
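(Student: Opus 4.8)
The plan is to deduce everything from the computation of the $E$-cohomology of projective spaces. Write $E^{\ast\ast}(Y)=\Hom_{\stablehomotopy}(F_{0}(Y),S^{\ast}\wedge\mathbb{G}_{m}^{\ast}\wedge E)$ for the bigraded cohomology theory represented by $E$; since $E$ is a commutative ring spectrum this is a (bigraded-commutative) ring, and the orientation class $x_{E}$ lies in $E^{\ast\ast}(\mathbb{P}^{\infty})$ in the bidegree of a first Chern class. The essential step is the projective bundle formula: for each $n\geq 1$ the class $x_{E}$ and its powers exhibit $E^{\ast\ast}(\mathbb{P}^{n})\cong E^{\ast\ast}(X)[x_{E}]/(x_{E}^{n+1})$ as a free module over the coefficient ring $E^{\ast\ast}(X)$, and passing to the colimit along the hyperplane inclusions gives
$$E^{\ast\ast}(\mathbb{P}^{\infty})\;\cong\;E^{\ast\ast}(X)[[x_{E}]].$$
Granting this, applying the same formula to the trivial bundles $\mathbb{P}^{n}\times\mathbb{P}^{m}=\mathbb{P}(\mathcal{O}^{m+1})\to\mathbb{P}^{n}$ and taking colimits identifies $E^{\ast\ast}(\mathbb{P}^{\infty}\times\mathbb{P}^{\infty})$ with $E^{\ast\ast}(X)[[x,y]]$, where $x=p_{1}^{\ast}x_{E}$ and $y=p_{2}^{\ast}x_{E}$. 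In particular $m^{\ast}x_{E}$ is automatically a power series in $x,y$, which is the assertion $F_{E}=\sum c_{ij}x^{i}y^{j}$, and a comparison of bidegrees (the orientation class has bidegree $(2,1)$, forcing $\deg c_{ij}=(-2(i+j-1),-(i+j-1))$) puts $c_{ij}$ exactly into the group $\Hom_{\stablehomotopy}(F_{0}(S^{i+j-1}\wedge\mathbb{G}_{m}^{i+j-1}),E)$ of the proposition.

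For the projective bundle formula I would argue by induction on $n$ using the standard cofibre sequences. In $\stablehomotopy$ one has $\mathbb{P}^{n}/\mathbb{P}^{n-1}\simeq T^{\wedge n}$ with $T\simeq S^{1}\wedge\mathbb{G}_{m}\simeq\mathbb{P}^{1}$, so each inclusion of the hyperplane at infinity yields a cofibre sequence $\mathbb{P}^{n-1}_{+}\to\mathbb{P}^{n}_{+}\to T^{\wedge n}$. The defining property of $x_{E}$ --- that it restricts on $\mathbb{P}^{1}$ to the suspension of the unit --- shows that $x_{E}^{n}$, which vanishes on $\mathbb{P}^{n-1}$ by induction, lifts to a generator of the top cell $T^{\wedge n}$; hence the associated long exact sequence splits and the inductive step goes through, and the relative version over $\mathbb{P}^{n}$ handles the products. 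Passing to $\mathbb{P}^{\infty}$ requires the vanishing of the relevant $\lim^{1}$ term, which is immediate because all transition maps in the towers are split surjective. This part is classical and essentially contained in the work of Morel \cite{MR1876212}, Quillen \cite{MR0290382} and the references cited there; it is also the place requiring care, since the base $X$ is an arbitrary Noetherian scheme and $E$ a genuinely symmetric $T$-spectrum --- this is the main obstacle.

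Once the cohomology rings are identified the formal group law axioms are formal. For the identity axiom, the composite $\mathbb{P}^{\infty}\cong\mathbb{P}^{\infty}\times\{\ast\}\hookrightarrow\mathbb{P}^{\infty}\times\mathbb{P}^{\infty}\xrightarrow{\ m\ }\mathbb{P}^{\infty}$ is a linear embedding of projective spaces, which pulls $\mathcal{O}(1)$ back to $\mathcal{O}(1)$ and hence $x_{E}$ back to $x_{E}$; thus $F_{E}(x,0)=x$, and symmetrically $F_{E}(0,y)=y$. Combined with the reducedness of $E^{\ast\ast}(\mathbb{P}^{\infty})$ (which kills the constant term) this says exactly that the sum runs over $i+j\geq 1$ and that $c_{10}=c_{01}=1$, the unit of $\Hom_{\stablehomotopy}(\spherespectrum,E)$. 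Associativity follows because the two composites $\mathbb{P}^{\infty}\times\mathbb{P}^{\infty}\times\mathbb{P}^{\infty}\rightrightarrows\mathbb{P}^{\infty}$ built from the Segre maps agree up to $\mathbb{A}^{1}$-homotopy (the Segre embedding being associative up to the canonical reindexing of projective coordinates), so pulling $x_{E}$ back along them and using functoriality of the identifications above gives $F_{E}(F_{E}(x,y),z)=F_{E}(x,F_{E}(y,z))$. Commutativity $F_{E}(x,y)=F_{E}(y,x)$ follows in the same way from the invariance of the Segre embedding under the swap of the two factors.
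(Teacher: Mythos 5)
The paper does not actually prove this proposition: it is stated as ``classical'' and delegated entirely to the cited references (Adams, Quillen, Morel). Your argument is precisely the standard proof contained in those references --- compute $E^{\ast\ast}(\mathbb P^{\infty})$ and $E^{\ast\ast}(\mathbb P^{\infty}\times\mathbb P^{\infty})$ as power series rings via the projective bundle formula, then read off the formal group law axioms from the unit section, the associativity of the Segre maps up to linear reindexing, and the swap --- so it is consistent with what the paper intends, and your degree bookkeeping (forcing $c_{ij}$ into $\Hom_{\stablehomotopy}(F_{0}(S^{i+j-1}\wedge\mathbb G_{m}^{i+j-1}),E)$) is exactly right. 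Two places deserve more than the one line you give them, and they are where Morel's proof does its real work. First, the claim that the lift of $x_{E}^{n}$ to $\widetilde{E}^{\ast\ast}(\mathbb P^{n}/\mathbb P^{n-1})\cong\widetilde{E}^{\ast\ast}(T^{\wedge n})$ is \emph{the} canonical generator is not automatic from ``$x_{E}$ restricts to the suspension of the unit on $\mathbb P^{1}$''; one has to check it against the equivalence $(\mathbb P^{1})^{\wedge n}\simeq\mathbb P^{n}/\mathbb P^{n-1}$ induced by the multiplication of homogeneous coordinates, under which $x_{E}^{n}$ pulls back to the $n$-fold external smash of $x_{E}|_{\mathbb P^{1}}$; that identification is the actual content of the inductive step. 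Second, ``the two Segre composites agree up to $\mathbb A^{1}$-homotopy'' amounts to the statement that the permutation of homogeneous coordinates relating them, i.e.\ a linear automorphism of $\mathbb P^{N}$ defined over $\mathbb Z$, becomes $\mathbb A^{1}$-homotopic to the identity after stabilizing to $\mathbb P^{\infty}$ (any two linear embeddings into $\mathbb P^{\infty}$ are $\mathbb A^{1}$-homotopic); this is true but should be invoked explicitly, as it is also what guarantees commutativity without the sign issues one might fear for classes of bidegree $(2,1)$. With those two points filled in from the cited sources, the argument is complete and works over an arbitrary Noetherian base, since the cell decomposition $\mathbb P^{n}/\mathbb P^{n-1}\simeq T^{\wedge n}$ and the Mittag--Leffler vanishing of $\lim^{1}$ that you use are base-independent.
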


\begin{lem}
		\label{lemma.slice-monoids}
	Let $E$ be a commutative ring spectrum in $\stablehomotopy$
	with unit $u^{E}:\spherespectrum \rightarrow E$, then its zero slice $s_{0}E$ is also
	a commutative ring spectrum in $\stablehomotopy$, and the induced map
		$$s_{0}(u^{E}):s_{0}\spherespectrum \rightarrow s_{0}E
		$$
	is a map of ring spectra in $\stablehomotopy$.
\end{lem}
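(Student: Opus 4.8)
The plan is to use the lax monoidal (or at least lax symmetric monoidal) behaviour of the slice functors. The key input is that $f_{0}$ is compatible with the smash product in the sense that there is a natural map $f_{0}A \wedge f_{0}B \to f_{0}(A\wedge B)$, which follows because $\Sigma_{T}^{0}\stablehomotopy^{eff}$ is closed under the smash product (the generators $F_{n}(S^{r}\wedge\mathbb{G}_{m}^{s}\wedge U_{+})$ with $s-n\geq 0$ are stable under $\wedge$, up to the standard manipulations with the functors $F_{n}$), together with the universal property of $f_{0}$ as the colocalization onto this subcategory. Passing to the quotient $s_{0} = f_{0}/f_{1}$, and using that $\Sigma_{T}^{1}\stablehomotopy^{eff}\wedge \stablehomotopy^{eff}\subseteq \Sigma_{T}^{1}\stablehomotopy^{eff}$, one obtains a natural pairing $s_{0}A\wedge s_{0}B \to s_{0}(A\wedge B)$ and a unit map $\spherespectrum \to s_{0}\spherespectrum$ (the latter because $\spherespectrum = F_{0}(S^{0})$ is itself effective, so the zeroth slice receives a map from it, or more precisely one uses the canonical $f_{0}\spherespectrum \to s_{0}\spherespectrum$ composed with $\spherespectrum\cong f_0\spherespectrum$).

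Concretely, I would proceed in the following steps. First, record that the smash product descends to a functor $s_{0}(-)\wedge s_{0}(-) \Rightarrow s_{0}(-\wedge -)$ which is coherently associative, commutative and unital (this is the technical heart, and is essentially the statement that $s_{0}$ is a lax symmetric monoidal functor on $\stablehomotopy$; references such as Guti\'errez--R\"ondigs--Spitzweck--\O stv\ae r or Pelaez's thesis on the multiplicativity of the slice filtration cover this, and the earlier sections of this paper already treat $s_{q}E$ as carrying module structures, which presupposes exactly this compatibility). Second, given the commutative ring structure $\mu^{E}:E\wedge E\to E$, $u^{E}:\spherespectrum\to E$, define the multiplication on $s_{0}E$ as the composite
$$
s_{0}E \wedge s_{0}E \longrightarrow s_{0}(E\wedge E) \xrightarrow{\ s_{0}(\mu^{E})\ } s_{0}E
$$
and the unit as $\spherespectrum \to s_{0}\spherespectrum \xrightarrow{s_{0}(u^{E})} s_{0}E$. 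Third, verify associativity, commutativity and the unit axioms for $s_{0}E$ by chasing the corresponding diagrams for $E$ through the lax monoidal structure maps; these are formal consequences of the coherence in step one together with the axioms for $(E,\mu^{E},u^{E})$.

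Finally, to see that $s_{0}(u^{E})$ is a ring map one checks compatibility with multiplications and units: the square relating $s_{0}(\mu^{\spherespectrum})$ and $s_{0}(\mu^{E})$ through $s_{0}(u^{E})\wedge s_{0}(u^{E})$ commutes because $u^{E}$ is a ring map for $\spherespectrum$ (whose multiplication is the canonical isomorphism $\spherespectrum\wedge\spherespectrum\cong\spherespectrum$), and the unit compatibility is immediate from naturality of $\spherespectrum\to s_{0}(-)$. I expect the main obstacle to be purely bookkeeping: making precise and coherent the lax symmetric monoidal structure on $s_{0}$ at the level of the triangulated category $\stablehomotopy$ (as opposed to a point-set or $\infty$-categorical model), i.e. checking that the pairing $s_{0}A\wedge s_{0}B\to s_{0}(A\wedge B)$ exists naturally, is associative and commutative up to the evident compatibilities, and is unital. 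Everything after that is a diagram chase.
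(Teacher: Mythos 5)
Your proposal is correct and follows essentially the same route as the paper: the paper simply cites the construction of the multiplicative pairings $s_{0}A\wedge s_{0}B\to s_{0}(A\wedge B)$ and their naturality/coherence from the author's thesis (theorems 3.6.9 and 3.6.13), which is exactly the lax symmetric monoidal structure on $s_{0}$ that you identify as the technical heart and then exploit by the same formal diagram chase.
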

\begin{proof}
	The fact that $s_{0}E$ is a ring spectrum in $\stablehomotopy$ follows from \cite[theorem 3.6.13]{thesis},
	on the other hand the naturality of the pairings constructed in \cite[theorem 3.6.9]{thesis} implies that $s_{0}E$ is
	also commutative in $\stablehomotopy$ and that $s_{0}(u^{E})$ is a map of ring spectra
	in $\stablehomotopy$.
\end{proof}
	
\begin{lem}
		\label{lemma.zeroMGL-oriented}
	The natural map
		$$\pi _{0}^{MGL}:MGL\cong f_{0}MGL \rightarrow s_{0}MGL
		$$
	is a map of ring spectra in $\stablehomotopy$.
\end{lem}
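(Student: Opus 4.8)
The plan is to show that $\pi_0^{MGL}$ respects the multiplicative structure by tracing through the way the ring structure on $s_0 MGL$ was constructed in Lemma~\ref{lemma.slice-monoids} (via \cite{thesis}). The point is that the pairings $f_a E \wedge f_b E \to f_{a+b} E$ and $s_a E \wedge s_b E \to s_{a+b} E$ from \cite[theorem 3.6.9]{thesis} are natural in $E$ and compatible with the natural transformations $\pi_q$, in the sense that for any commutative ring spectrum $E$ the square relating the multiplication $\mu$ on $f_0 E \cong E$ (via the isomorphism $MGL \cong f_0 MGL$, valid since $MGL$ is effective by Proposition~\ref{prop.unitMGL-iso}) and the induced multiplication on $s_0 E$ commutes with $\pi_0^E \wedge \pi_0^E$ on one side and $\pi_0^E$ on the other. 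Applied to $E = MGL$, this is exactly the statement that $\pi_0^{MGL}$ is multiplicative; compatibility with units is the corresponding statement for the unit maps, which is again part of the naturality in \cite[theorems 3.6.9, 3.6.13]{thesis}.

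First I would recall the identification $f_0 MGL \cong MGL$: since $MGL$ is effective, the counit $f_0 MGL \to MGL$ is an isomorphism in $\stablehomotopy$, so the ring structure on $MGL$ transports to $f_0 MGL$, and under this identification the map $\pi_0^{MGL}: f_0 MGL \to s_0 MGL$ is the one in question. Next I would invoke the construction in \cite[theorem 3.6.13]{thesis} which endows $s_0 MGL$ with its ring structure precisely so that $\pi_0^{MGL}$ is a ring map — in other words, this lemma is essentially a special case, for $E = MGL$, of Lemma~\ref{lemma.slice-monoids} combined with the fact that the map $f_0 E \to s_0 E$ appearing there is $\pi_0^E$. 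So the core of the proof is simply: the pairing on slices in \cite{thesis} is defined via the pairing on connective covers, and $\pi_\bullet$ is a morphism of the relevant lax monoidal structures.

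The main obstacle — or really the only thing to check carefully — is that the multiplicativity is genuinely on the nose compatible with $\pi_0$, i.e. that the diagram
$$\xymatrix{f_0 MGL \wedge f_0 MGL \ar[r] \ar[d]_{\pi_0 \wedge \pi_0} & f_0 MGL \ar[d]^{\pi_0} \\ s_0 MGL \wedge s_0 MGL \ar[r] & s_0 MGL}$$
commutes in $\stablehomotopy$, where the horizontal maps are the multiplications. This follows from the naturality of the pairing $f_a \wedge f_b \to f_{a+b}$ together with the defining property that $s_q$ is the cofiber of $\rho_q: f_{q+1} \to f_q$ and the pairings descend along these cofiber sequences; concretely one uses that $f_1 MGL \wedge f_0 MGL$ and $f_0 MGL \wedge f_1 MGL$ both map into $f_1 MGL$, so their images vanish in $s_0 MGL$, yielding the factorization through $\pi_0 \wedge \pi_0$. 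All of this is recorded in \cite[theorems 3.6.9 and 3.6.13]{thesis}; I would cite those and remark that the present lemma is the instance $E = MGL$.

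\begin{proof}
	Since $MGL$ is effective by proposition \ref{prop.unitMGL-iso}, the counit $f_0 MGL \to MGL$ is an
	isomorphism in $\stablehomotopy$, and we use it to transport the ring structure of $MGL$ to $f_0 MGL$.
	By \cite[theorem 3.6.13]{thesis} the zero slice $s_0 MGL$ carries a ring structure characterized by the
	requirement that the natural map $\pi_0^{MGL}: f_0 MGL \to s_0 MGL$ be a map of ring spectra in
	$\stablehomotopy$; this ring structure agrees with the one on $s_0 MGL$ furnished by lemma
	\ref{lemma.slice-monoids}, since both are induced from the pairings of \cite[theorem 3.6.9]{thesis}
	by passing to cofibers along the maps $\rho_q$. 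Indeed, the pairing
	$f_a MGL \wedge f_b MGL \to f_{a+b} MGL$ is natural and compatible with the transformations $\rho_q$,
	so the images of $f_1 MGL \wedge f_0 MGL$ and $f_0 MGL \wedge f_1 MGL$ under the multiplication land in
	$f_1 MGL$ and hence vanish in $s_0 MGL$; this produces a commutative square
		$$\xymatrix{f_0 MGL \wedge f_0 MGL \ar[r] \ar[d]_-{\pi_0^{MGL} \wedge \pi_0^{MGL}}
			& f_0 MGL \ar[d]^-{\pi_0^{MGL}} \\
			s_0 MGL \wedge s_0 MGL \ar[r] & s_0 MGL}$$
	in $\stablehomotopy$, where the horizontal arrows are the multiplication maps. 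The analogous statement
	for the unit maps is the corresponding instance of the naturality in \cite[theorems 3.6.9 and 3.6.13]{thesis}.
	Therefore $\pi_0^{MGL}$ is a map of ring spectra in $\stablehomotopy$.
\end{proof}
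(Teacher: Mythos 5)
Your proof is correct and follows essentially the same route as the paper: identify $MGL$ with $f_{0}MGL$ via effectivity (proposition \ref{prop.unitMGL-iso}) and then appeal to the compatibility of the pairings on connective covers and slices from \cite{thesis} (the paper cites theorem 3.6.10(3) there, which packages exactly the multiplicativity of $\pi_{0}$ that you reconstruct from theorems 3.6.9 and 3.6.13). The extra sketch of why the multiplication descends along $\pi_{0}\wedge\pi_{0}$ is a reasonable gloss on what those cited results already contain.
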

\begin{proof}
	By proposition \ref{prop.unitMGL-iso} we have that $MGL$ is naturally isomorphic
	to $f_{0}MGL$ in $\stablehomotopy$.
	On the other hand, theorem 3.6.10(3) in \cite{thesis} implies that
	$\pi _{0}^{MGL}$ is a map of ring spectra in $\stablehomotopy$.
\end{proof}

\begin{thm}
		\label{thm.ringspectra-oriented}
	Let $E$ be a commutative ring spectrum in $\stablehomotopy$.  Then its zero slice $s_{0}E$
	is an oriented ring spectrum in the sense of Morel and it is equipped with a canonical orientation
	given by the following composition
		$$\xymatrix{MGL \ar[rr]^-{\pi _{0}^{MGL}} && s_{0}MGL  \ar[rrr]^-{(s_{0}(u^{MGL}))^{-1}} &&&
					s_{0}\spherespectrum \ar[rr]^{s_{0}(u^{E})} && s_{0}E}
		$$
	Furthermore, the associated formal group law $F_{s_{0}E}$ of $s_{0}E$ is additive.
\end{thm}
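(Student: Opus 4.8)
The plan is to transport the canonical orientation of $MGL$ along the ring map written down in the statement, and then to read the formal group law off from the defining orthogonality of the zero slice.

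First I would use that $MGL$ carries its classical (Thom class) orientation, i.e.\ a class $x_{MGL}\in\Hom_{\stablehomotopy}(F_{0}(\mathbb P^{\infty}),S^{1}\wedge\mathbb G_{m}\wedge MGL)$ satisfying the normalization of Definition~\ref{def.orientedringspectra}; see the references of Proposition~\ref{prop.FGL}. Combining Lemma~\ref{lemma.slice-monoids}, Lemma~\ref{lemma.zeroMGL-oriented} and Proposition~\ref{prop.unitMGL-iso}, the composition
$$\varphi\colon MGL\xrightarrow{\pi_{0}^{MGL}}s_{0}MGL\xrightarrow{(s_{0}(u^{MGL}))^{-1}}s_{0}\spherespectrum\xrightarrow{s_{0}(u^{E})}s_{0}E$$
is a morphism of ring spectra in $\stablehomotopy$: the outer arrows are ring maps by the cited lemmas, and the middle one is the inverse of the ring isomorphism $s_{0}(u^{MGL})$, hence again a ring map. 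I would then declare the orientation of $s_{0}E$ to be the pushforward $x_{s_{0}E}:=(\mathrm{id}_{S^{1}\wedge\mathbb G_{m}}\wedge\varphi)\circ x_{MGL}$. To see that this is an orientation in Morel's sense, restrict along $F_{0}(\mathbb P^{1})\to F_{0}(\mathbb P^{\infty})$: since $x_{MGL}$ restricts to $\mathrm{id}\wedge u^{MGL}$ and $\varphi$ is unital, $x_{s_{0}E}$ restricts to $\mathrm{id}\wedge(\varphi\circ u^{MGL})=\mathrm{id}\wedge u^{s_{0}E}$, which is exactly the composition required in Definition~\ref{def.orientedringspectra}.

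For the formal group law, I would apply Proposition~\ref{prop.FGL} to $(s_{0}E,x_{s_{0}E})$: the pullback of $x_{s_{0}E}$ along the Segre multiplication is $F_{s_{0}E}=\sum_{i+j\geq 1}c_{ij}x^{i}y^{j}$ with $c_{ij}\in\Hom_{\stablehomotopy}(F_{0}(S^{i+j-1}\wedge\mathbb G_{m}^{i+j-1}),s_{0}E)$. The key point is that for $i+j\geq 2$ the source $F_{0}(S^{i+j-1}\wedge\mathbb G_{m}^{i+j-1})$ is a generator of $\Sigma_{T}^{i+j-1}\stablehomotopy^{eff}$ (it sits in the family $C^{i+j-1}_{eff}$, with free-functor index $0$ and $r=s=i+j-1$), and $i+j-1\geq 1$ forces $\Sigma_{T}^{i+j-1}\stablehomotopy^{eff}\subseteq\Sigma_{T}^{1}\stablehomotopy^{eff}$. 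Since $s_{0}E$ is right orthogonal to $\Sigma_{T}^{1}\stablehomotopy^{eff}$, we get $c_{ij}=0$ for all $i+j\geq 2$. The two surviving coefficients $c_{10}$ and $c_{01}$ are forced to equal the unit of $s_{0}E$ by the identities $F_{s_{0}E}(x,0)=x$ and $F_{s_{0}E}(0,y)=y$ valid for any formal group law, so $F_{s_{0}E}(x,y)=x+y$ and $F_{s_{0}E}$ is additive.

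The step I expect to cost the most work is the verification that $x_{s_{0}E}$ is genuinely an orientation: one must track carefully the identification $F_{0}(\mathbb P^{1})\cong F_{0}(S^{1}\wedge\mathbb G_{m})$, the unit of the ring spectrum $s_{0}E$ furnished by Lemma~\ref{lemma.slice-monoids}, and the compatibility of $\varphi$ with units. Granting this, the additivity of $F_{s_{0}E}$ requires no computation at all: it is a formal consequence of the right orthogonality of the zero slice to $\Sigma_{T}^{1}\stablehomotopy^{eff}$.
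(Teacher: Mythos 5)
Your proposal is correct and follows essentially the same route as the paper: the paper invokes the universality of $MGL$ (ring maps out of $MGL$ correspond to orientations) and reduces to checking that all three arrows are ring maps via Proposition~\ref{prop.unitMGL-iso} and Lemmas~\ref{lemma.slice-monoids} and \ref{lemma.zeroMGL-oriented}, which is exactly your pushforward of the Thom class along $\varphi$ made explicit; the additivity argument via right orthogonality of $s_{0}E$ to $\Sigma_{T}^{1}\stablehomotopy^{eff}$ is identical.
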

\begin{proof}
	The universality of $MGL$ (cf. \cite[theorem 4.3]{MR1876212},
	\cite[theorem 2.7]{MR2475610} and \cite[proposition A.2]{MR2496504})
	implies that in order to show that the map defined above gives an orientation
	for $s_{0}E$,
	it is enough to see that all the maps are
	in fact maps of ring spectra in $\stablehomotopy$.
	But this follows directly from proposition \ref{prop.unitMGL-iso}
	together with lemmas \ref{lemma.slice-monoids} and \ref{lemma.zeroMGL-oriented}.
	
	On the other hand, the formal group law of $s_{0}E$
		$$F_{s_{0}E}=\sum _{i+j\geq 1} c_{ij}x^{i}y^{j}
		$$
	has coefficients $c_{ij}$ which by construction are in the
	abelian group 
		$$\Hom _{\stablehomotopy}(F_{0}(S^{i+j-1}\wedge \mathbb{G}_{m}^{i+j-1}), s_{0}E)
		$$
	However, if $i+j>1$ then $F_{0}(S^{i+j-1}\wedge \mathbb{G}_{m}^{i+j-1})$ is automatically in
	$\Sigma _{T}^{1}\stablehomotopy^{eff}$; hence
		$$\Hom _{\stablehomotopy}(F_{0}(S^{i+j-1}\wedge \mathbb{G}_{m}^{i+j-1}), s_{0}E)=0
		$$	
	since $s_{0}E$ is right orthogonal with respect to $\Sigma _{T}^{1}\stablehomotopy^{eff}$.
	
	Therefore, the formal group law of $s_{0}E$
		$$F_{s_{0}E}=\sum _{i+j\geq 1} c_{ij}x^{i}y^{j}=x+y
		$$
	is additive, as we wanted.
\end{proof}
\section{Applications}
		\label{sect-3}
		
In this section we will show that with rational coefficients all the slices 
$s_{q}(E)\otimes \spherespectrum _{\mathbb Q}$ are in a natural way motives
in the sense of Cisinski-D\'eglise \cite{mixedmotives}; as a consequence we will get that
over an excellent base scheme
the presheaves of rational rigid homotopy groups have transfers, this proves a conjecture
of Voevodsky \cite[conjecture 11]{MR1977582}.

\subsection{Slices and the Cisinski-D\'eglise category of motives}
		\label{subsect-3.1}
		
	Let 
		$$\mathbf{H}_{\mathbb B ,X}=KGL _{X}^{(0)} \in Spt(\mathcal M _{X})
		$$ 
	denote the Beilinson motivic
	cohomology spectrum constructed by Riou in \cite{MR2651359}.
	The work of Cisinski-D\'eglise shows in particular
	that $\mathbf{H}_{\mathbb B ,X}$ is a commutative
	cofibrant ring spectrum in $Spt(\mathcal M _{X})$ (cf. \cite[corollary 13.2.6]{mixedmotives});
	and that the homotopy category of $\mathbf{H}_{\mathbb B ,X}$-modules 
	$\stablehomotopy ^{\mathbf{H}_{\mathbb B ,X}}$
	is naturally equivalent to the Cisinski-D\'eglise category of motives $DM_{\mathbb B, X}$
	(cf. \cite[theorem 13.2.9]{mixedmotives}).
	
\begin{thm}
		\label{thm.zeroslice-Hbalgebra}
	If we consider rational coefficients,
	the zero slice of the sphere spectrum
	$s_{0}(\spherespectrum)\otimes \spherespectrum _{\mathbb Q}$
	is equipped with a unique structure of
	$\mathbf{H}_{\mathbb B ,X}$-algebra in $Spt (\mathcal M _{X})$.
	
	In particular, there exists a unique map 
	$\eta _{s_{0}(\spherespectrum )\otimes \spherespectrum _{\mathbb Q}}$
	of ring spectra in $\stablehomotopy$
	such that the following diagram is commutative
	
	$$\xymatrix{MGL\otimes \spherespectrum _{\mathbb Q} \ar[rr]^-{\pi _{0}^{MGL}\otimes id} 
					\ar[d]_-{\eta _{\mathbf H}}
					&& s_{0}(MGL)\otimes \spherespectrum _{\mathbb Q}  
					\ar[rrr]^-{(s_{0}(u^{MGL}))^{-1}\otimes id} &&&
					s_{0}(\spherespectrum)\otimes \spherespectrum _{\mathbb Q}  \\
					\mathbf{H}_{\mathbb B ,X} \ar@{-->}[urrrrr]_-{\eta _{s_{0}(\spherespectrum )
					\otimes \spherespectrum _{\mathbb Q}}}&& &&&}
		$$
\end{thm}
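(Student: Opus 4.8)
The plan is to combine Theorem~\ref{thm.ringspectra-oriented} with the universal property of Beilinson motivic cohomology established by Cisinski-D\'eglise. First I would record that $s_0(\spherespectrum)\otimes\spherespectrum_{\mathbb Q}$ is a $\mathbb Q$-linear commutative ring spectrum which is orientable with additive formal group law. By Lemma~\ref{lemma.slice-monoids}, $s_0\spherespectrum$ is a commutative ring spectrum, and since $-\otimes\spherespectrum_{\mathbb Q}$ is a symmetric monoidal functor it takes $s_0\spherespectrum$ to a commutative ring spectrum, which is moreover $\spherespectrum_{\mathbb Q}$-linear. Applying Theorem~\ref{thm.ringspectra-oriented} with $E=\spherespectrum$, so that $s_0(u^{\spherespectrum})=\mathrm{id}$, exhibits $s_0\spherespectrum$ as an oriented ring spectrum with additive formal group law, the orientation being classified by the ring map $(s_0(u^{MGL}))^{-1}\circ\pi_0^{MGL}\colon MGL\to s_0\spherespectrum$ (here I use the universality of $MGL$, cf. the proof of Theorem~\ref{thm.ringspectra-oriented}). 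Rationalizing this ring map produces a ring map $MGL\otimes\spherespectrum_{\mathbb Q}\to s_0(\spherespectrum)\otimes\spherespectrum_{\mathbb Q}$, which is exactly the composite displayed along the top of the diagram; it endows $s_0(\spherespectrum)\otimes\spherespectrum_{\mathbb Q}$ with an orientation, and since rationalization is a morphism of oriented ring spectra it carries the additive formal group law of $s_0\spherespectrum$ to that of $s_0(\spherespectrum)\otimes\spherespectrum_{\mathbb Q}$, which is therefore again additive. Being orientable, $s_0(\spherespectrum)\otimes\spherespectrum_{\mathbb Q}$ lies in the subcategory of the rational motivic stable homotopy category to which the universal property of $\mathbf{H}_{\mathbb B,X}$ applies.

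Next I would invoke the characterization of $\mathbf{H}_{\mathbb B,X}$ due to Cisinski-D\'eglise \cite{mixedmotives}: $\mathbf{H}_{\mathbb B,X}$ is the universal $\mathbb Q$-linear oriented commutative ring spectrum with additive formal group law. Concretely, $\mathbf{H}_{\mathbb B,X}$ carries its canonical additive orientation, classified by the ring map $\eta_{\mathbf H}\colon MGL\otimes\spherespectrum_{\mathbb Q}\to\mathbf{H}_{\mathbb B,X}$ appearing in the diagram, and for every $\mathbb Q$-linear oriented commutative ring spectrum $E$ with additive formal group law there is a unique morphism of ring spectra $\mathbf{H}_{\mathbb B,X}\to E$ whose composite with $\eta_{\mathbf H}$ is the ring map classifying the orientation of $E$; equivalently, such an $E$ acquires a unique structure of $\mathbf{H}_{\mathbb B,X}$-algebra in $Spt(\mathcal M_X)$ compatible with its orientation. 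This rests on the identification, via $\eta_{\mathbf H}$-base change, of $\mathbf{H}_{\mathbb B,X}$-modules with $DM_{\mathbb B,X}$, cf. \cite[theorem 13.2.9]{mixedmotives}.

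Finally I would apply this universal property to $E=s_0(\spherespectrum)\otimes\spherespectrum_{\mathbb Q}$, which by the first step is a $\mathbb Q$-linear oriented commutative ring spectrum with additive formal group law. This produces the asserted unique $\mathbf{H}_{\mathbb B,X}$-algebra structure and, in particular, the unique morphism of ring spectra $\eta_{s_0(\spherespectrum)\otimes\spherespectrum_{\mathbb Q}}\colon\mathbf{H}_{\mathbb B,X}\to s_0(\spherespectrum)\otimes\spherespectrum_{\mathbb Q}$ whose composite with $\eta_{\mathbf H}$ is the ring map classifying the orientation of $s_0(\spherespectrum)\otimes\spherespectrum_{\mathbb Q}$, i.e. the top row of the diagram; commutativity and uniqueness of the triangle are then precisely the content of the universal property. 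The part that has to be checked rather than cited is the bookkeeping: on the one hand, identifying the rationalized orientation of $s_0\spherespectrum$ coming from Theorem~\ref{thm.ringspectra-oriented} with the composite along the top of the diagram; on the other hand, extracting from \cite{mixedmotives} the precise universal property of $\mathbf{H}_{\mathbb B,X}$ in the structured sense needed for the uniqueness of the $\mathbf{H}_{\mathbb B,X}$-algebra structure in $Spt(\mathcal M_X)$, rather than merely in the homotopy category. I expect the latter to be the only genuine obstacle; once it is in hand the theorem is immediate.
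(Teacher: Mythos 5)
Your proposal is correct and follows essentially the same route as the paper: the paper's proof likewise deduces orientability (with additive formal group law) of $s_{0}(\spherespectrum)$ from Theorem~\ref{thm.ringspectra-oriented} and then invokes the universal property of $\mathbf{H}_{\mathbb B ,X}$ among rational oriented ring spectra with additive formal group law, citing corollary 13.2.15(Ri),(Rii),(Riii) of \cite{mixedmotives}. Your write-up merely unpacks in more detail the bookkeeping that the paper leaves implicit.
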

\begin{proof}
	By theorem
	\ref{thm.ringspectra-oriented} we have that $s_{0}(\spherespectrum)$ is orientable.
	Therefore, the result follows directly from corollary 13.2.15(Ri),(Rii),(Riii)
	in \cite{mixedmotives}.
\end{proof}

\begin{thm}
		\label{thm.slices-motives}
	Let $q \in \mathbb Z$ denote an arbitrary integer and $E$ denote
	an arbitrary symmetric $T$-spectrum in $Spt (\mathcal M _{X})$. 
	We have that the $q$-slice of $E$ with rational coefficients
	$s_{q}(E)\otimes \spherespectrum _{\mathbb Q}$ is equipped with a canonical structure of
	$\mathbf{H}_{\mathbb B ,X}$-module 
	in $Spt (\mathcal M _{X})$.  This implies that
	over any base scheme, the slices with rational coefficients 
	$s_{q}(E)\otimes \spherespectrum _{\mathbb Q}$
	are motives in the sense of Cisinski-D\'eglise.
\end{thm}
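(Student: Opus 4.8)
The plan is to bootstrap from the case $q=0$ treated in Theorem \ref{thm.zeroslice-Hbalgebra} to arbitrary $q$ using the fact that, by Theorem \ref{thm.orientability}, every slice $s_q(E)$ is already a strict $MGL$-module in $Spt(\mathcal M_X)$. The first step is to tensor with rational coefficients: the functor $-\otimes\spherespectrum_{\mathbb Q}$ is symmetric monoidal and preserves the module structure, so $s_q(E)\otimes\spherespectrum_{\mathbb Q}$ is a strict module over $MGL\otimes\spherespectrum_{\mathbb Q}$. Next I would invoke Theorem \ref{thm.zeroslice-Hbalgebra}, which produces the map of ring spectra $\eta_{\mathbf H}:MGL\otimes\spherespectrum_{\mathbb Q}\to \mathbf{H}_{\mathbb B,X}$ factoring through $s_0(\spherespectrum)\otimes\spherespectrum_{\mathbb Q}$; restriction of scalars along $\eta_{\mathbf H}$ would then turn any $MGL\otimes\spherespectrum_{\mathbb Q}$-module into an $\mathbf{H}_{\mathbb B,X}$-module, but this goes the wrong way — we want to go \emph{up} to $\mathbf{H}_{\mathbb B,X}$, not down. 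So instead I would use extension of scalars: form $\mathbf{H}_{\mathbb B,X}\wedge_{MGL\otimes\spherespectrum_{\mathbb Q}}(s_q(E)\otimes\spherespectrum_{\mathbb Q})$, which is automatically an $\mathbf{H}_{\mathbb B,X}$-module. The crucial point to check is that the canonical map from $s_q(E)\otimes\spherespectrum_{\mathbb Q}$ to this extension is an isomorphism in $\stablehomotopy$, which amounts to showing that the $MGL\otimes\spherespectrum_{\mathbb Q}$-module structure on a rationalized slice is already an $\mathbf{H}_{\mathbb B,X}$-module structure via $\eta_{\mathbf H}$.

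The key input for that verification is the additivity of the formal group law from Theorem \ref{thm.ringspectra-oriented}: with rational coefficients, $MGL_{\mathbb Q}$ splits as a wedge of shifted copies of $\mathbf{H}_{\mathbb B,X}$ (the rational Landweber/Quillen-type splitting, which follows from the formal group law of $\mathbf{H}_{\mathbb B,X}\otimes\mathbb Q$ being additive and universal among additive ones over $\mathbb Q$-algebras), and the map $\eta_{\mathbf H}$ is the projection onto the degree-zero summand. A module over $MGL_{\mathbb Q}$ on which the positive-weight operations act trivially — which is exactly the situation for $s_q(E)\otimes\spherespectrum_{\mathbb Q}$, since the higher coefficients $c_{ij}$ of its formal group law live in $\Hom$-groups that vanish by right orthogonality with respect to $\Sigma_T^{1}\stablehomotopy^{eff}$ — descends canonically to a module over the degree-zero quotient $\mathbf{H}_{\mathbb B,X}$. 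Concretely, I would argue that $s_q(E)\otimes\spherespectrum_{\mathbb Q}$ is an $s_0(\spherespectrum)\otimes\spherespectrum_{\mathbb Q}$-module (using Lemma \ref{lemma.slice-monoids} and naturality of the slice pairings, together with the fact that $s_0$ is idempotent so that $s_q(E)$ is already a module over the zero-slice of the sphere), and then transport this along the $\mathbf{H}_{\mathbb B,X}$-algebra structure on $s_0(\spherespectrum)\otimes\spherespectrum_{\mathbb Q}$ furnished by Theorem \ref{thm.zeroslice-Hbalgebra}.

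For the final sentence of the statement — that $s_q(E)\otimes\spherespectrum_{\mathbb Q}$ is a motive in the sense of Cisinski-D\'eglise — I would simply cite the equivalence $\stablehomotopy^{\mathbf{H}_{\mathbb B,X}}\simeq DM_{\mathbb B,X}$ of \cite[theorem 13.2.9]{mixedmotives} recalled just above the theorem: once the slice is realized as an $\mathbf{H}_{\mathbb B,X}$-module in $Spt(\mathcal M_X)$, its image under this equivalence is by definition an object of $DM_{\mathbb B,X}$. Canonicity of the whole construction follows from the uniqueness clause in Theorem \ref{thm.zeroslice-Hbalgebra} together with the fact that the $MGL$-module structure from Theorem \ref{thm.orientability} is itself canonical.

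I expect the main obstacle to be the descent step: verifying rigorously that the strict $MGL\otimes\spherespectrum_{\mathbb Q}$-module structure on $s_q(E)\otimes\spherespectrum_{\mathbb Q}$ factors through $\mathbf{H}_{\mathbb B,X}$ at the point-set level in $Spt(\mathcal M_X)$, rather than merely in the homotopy category. The orthogonality argument that kills the higher formal-group-law coefficients is a statement in $\stablehomotopy$, so one must upgrade it — either by appealing to the rigidity of the relevant mapping spectra, or by using that $\mathbf{H}_{\mathbb B,X}$ is characterized as a localization of $MGL\otimes\spherespectrum_{\mathbb Q}$ and invoking the universal property of the corresponding module categories (e.g. \cite[corollary 13.2.15]{mixedmotives}) so that the factorization is forced rather than chosen. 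Handling this cleanly, and making sure the construction is functorial in $E$, is where the real work lies; the rest is assembling already-established inputs.
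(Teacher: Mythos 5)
Your proposal is correct in substance, and its ``concrete'' second half is precisely the paper's proof: show that $s_{q}(E)\otimes \spherespectrum _{\mathbb Q}$ is a module over $s_{0}(\spherespectrum)\otimes \spherespectrum _{\mathbb Q}$ (the paper cites theorem 3.6.14(6) of \cite{thesis} for this --- it is the multiplicativity of the slice pairings, not the idempotence of $s_{0}$, that does the work), then restrict scalars along the $\mathbf{H}_{\mathbb B ,X}$-algebra map $\eta _{s_{0}(\spherespectrum )\otimes \spherespectrum _{\mathbb Q}}$ of theorem \ref{thm.zeroslice-Hbalgebra}, and finally invoke corollary 13.2.15 of \cite{mixedmotives} to pass from a module structure in the homotopy category $\stablehomotopy$ to an actual object of $DM_{\mathbb B ,X}$. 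That last citation is exactly what resolves the ``point-set versus homotopy category'' worry you flag at the end: because $\mathbf{H}_{\mathbb B ,X}$-modules form a full subcategory of the rational stable homotopy category, a homotopy $\mathbf{H}_{\mathbb B ,X}$-module structure suffices, and no strictification needs to be performed by hand. The entire first half of your argument --- passing through strict $MGL\otimes \spherespectrum _{\mathbb Q}$-modules, extension of scalars $\mathbf{H}_{\mathbb B ,X}\wedge _{MGL_{\mathbb Q}}(-)$, the rational splitting of $MGL_{\mathbb Q}$ into shifted copies of $\mathbf{H}_{\mathbb B ,X}$, and the descent argument killing positive-weight operations --- is therefore an unnecessary detour, and it is also the least rigorous part of what you wrote (the identification of the formal-group-law coefficients $c_{ij}$ with the operations one must kill is only sketched). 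You should lead with the direct route via $s_{0}(\spherespectrum)$-modules and drop the $MGL$-module detour; what you describe as ``where the real work lies'' is in fact already packaged in the two citations above.
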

\begin{proof}
	Using corollary 13.2.15(i),(iv),(v) in \cite{mixedmotives}, we get that
	it is enough to show that $s_{q}(E)\otimes \spherespectrum _{\mathbb Q}$
	is a $\mathbf{H}_{\mathbb B ,X}$-module in $\stablehomotopy$.	
	On the other hand, by theorem \ref{thm.zeroslice-Hbalgebra}
	we just need to check that $s_{q}(E)\otimes \spherespectrum _{\mathbb Q}$
	is a $s_{0}(\spherespectrum) \otimes \spherespectrum _{\mathbb Q}$-module in $\stablehomotopy$.	
	Finally, this follows directly from theorem 3.6.14(6) in \cite{thesis}.
\end{proof}
		
\subsection{Rational Rigid Homotopy groups}
		\label{subsect-3.2}
	
	Given a symmetric $T$-spectrum $E$ in $Spt (\mathcal M _{X})$,
	Voevodsky defines the presheaves of rigid homotopy groups $\pi _{p,q}^{rig}(E)$
	on $Sm_{X}$ as follows:
		$$\xymatrix@R=0.5pt{\pi _{p,q}^{rig}(E): Sm _{X} \ar[rr] && \mathrm{Abelian \; Groups} \\
						U \ar@{|->}[rr]&& 
						\Hom _{\stablehomotopy}(F_{0}(S^{p}\wedge \mathbb G _{m}^{q}\wedge U_{+}),
								s_{q}E)} 
		$$
	Conjecture 11 in \cite{MR1977582} claims that these presheaves have transfers.
	
\begin{thm}
		\label{Voevodsky's-conj}
	Let $p, q \in \mathbb Z$ denote arbitrary integers and $E$ denote
	an arbitrary symmetric $T$-spectrum in $Spt (\mathcal M _{X})$.
	Furthermore, assume that the base scheme $X$ is excellent.  Then the 
	presheaves of rigid homotopy groups of $E$ with rational coefficients
	$\pi _{p,q}^{rig}(E)\otimes \mathbb Q$ have transfers.
\end{thm}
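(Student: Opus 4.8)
The plan is to recognize $\pi_{p,q}^{rig}(E)\otimes\mathbb Q$ as a bigraded homotopy presheaf of a Beilinson motive and then to invoke the Cisinski--D\'eglise structure theory of such motives over an excellent base. First I would unwind the definitions rationally. Since $F_0(S^p\wedge\mathbb G_m^q\wedge U_+)$ is a compact object of $\stablehomotopy$, the rationalization can be moved inside the $\Hom$, so that
\[
\pi_{p,q}^{rig}(E)(U)\otimes\mathbb Q \;\cong\; \Hom_{\stablehomotopy}\!\bigl(F_0(S^p\wedge\mathbb G_m^q\wedge U_+),\; s_q(E)\otimes\spherespectrum_{\mathbb Q}\bigr).
\]
By Theorem \ref{thm.slices-motives} the target $s_q(E)\otimes\spherespectrum_{\mathbb Q}$ carries a canonical structure of $\mathbf{H}_{\mathbb{B},X}$-module, so by the free--forgetful adjunction between $\stablehomotopy$ and $\stablehomotopy^{\mathbf{H}_{\mathbb{B},X}}$ the group above is computed in $\stablehomotopy^{\mathbf{H}_{\mathbb{B},X}}$ as
\[
\Hom_{\stablehomotopy^{\mathbf{H}_{\mathbb{B},X}}}\!\bigl(\mathbf{H}_{\mathbb{B},X}\wedge F_0(S^p\wedge\mathbb G_m^q\wedge U_+),\; s_q(E)\otimes\spherespectrum_{\mathbb Q}\bigr).
\]

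Under the Cisinski--D\'eglise equivalence $\stablehomotopy^{\mathbf{H}_{\mathbb{B},X}}\simeq DM_{\mathbb{B},X}$, the source object is the Tate twist and shift $M_{\mathbb{B}}(U)(q)[p+q]$ of the Beilinson motive of $U$; thus, for fixed $(p,q)$, the presheaf $\pi_{p,q}^{rig}(E)\otimes\mathbb Q$ is identified with $U\mapsto \Hom_{DM_{\mathbb{B},X}}(M_{\mathbb{B}}(U)(q)[p+q],\, s_q(E)\otimes\spherespectrum_{\mathbb Q})$, i.e. with a bigraded homotopy presheaf of a fixed object of $DM_{\mathbb{B},X}$. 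Next I would invoke the fact that, over an \emph{excellent} base scheme, $DM_{\mathbb{B},X}$ is equipped with transfers: the motive functor $U\mapsto M_{\mathbb{B}}(U)$ factors through the category of finite correspondences over $X$ --- equivalently, $DM_{\mathbb{B},X}$ is built rationally from Nisnevich sheaves with transfers à la Cisinski--D\'eglise --- and hence the homotopy presheaves of any object of $DM_{\mathbb{B},X}$ are presheaves with transfers on $Sm_{X}$. Applying this to the object $s_q(E)\otimes\spherespectrum_{\mathbb Q}$ and to the twist and shift $(q)[p+q]$ produces the desired transfer structure on $\pi_{p,q}^{rig}(E)\otimes\mathbb Q$; functoriality of the whole construction in $E$ and the canonicity asserted in Theorem \ref{thm.slices-motives} make these transfers canonical.

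The main obstacle is isolating and citing correctly the input from \cite{mixedmotives}: one needs that over an excellent base the equivalence $\stablehomotopy^{\mathbf{H}_{\mathbb{B},X}}\simeq DM_{\mathbb{B},X}$ is compatible with the two motive functors, and that $DM_{\mathbb{B},X}$ then genuinely carries transfers along arbitrary finite correspondences. It is here, and only here, that excellence is used --- via de Jong alterations, which are available with rational coefficients over excellent schemes of finite dimension. Granting this, the only remaining points are routine: checking compactness to pull the rationalization inside the $\Hom$, tracking the Tate twist and shift in the identification of $\mathbf{H}_{\mathbb{B},X}\wedge F_0(S^p\wedge\mathbb G_m^q\wedge U_+)$ with $M_{\mathbb{B}}(U)(q)[p+q]$, and verifying that the transfers so obtained are compatible with the contravariant functoriality in $E$, so that one obtains a genuinely natural structure.
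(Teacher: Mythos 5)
Your overall strategy matches the paper's: reduce to showing that $s_q(E)\otimes \spherespectrum_{\mathbb Q}$, viewed via Theorem \ref{thm.slices-motives} as an object of $DM_{\mathbb B, X}$, has homotopy presheaves with transfers. Your preliminary reductions --- compactness of $F_0(S^p\wedge \mathbb G_m^q\wedge U_+)$ to move the rationalization inside the $\Hom$, the free--forgetful adjunction, and the identification of the corepresenting object with $M_{\mathbb B}(U)(q)[p+q]$ --- are fine, and are in fact spelled out more carefully than in the paper's own proof.

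The gap is in the step you yourself flag as ``the main obstacle'' and then assert rather than prove: that over an arbitrary excellent base the motive functor factors through finite correspondences, i.e.\ that $DM_{\mathbb B, X}$ is ``built rationally from Nisnevich sheaves with transfers.'' In Cisinski--D\'eglise that comparison is established only for excellent \emph{geometrically unibranch} schemes, not for arbitrary excellent ones, so as written your argument proves the theorem only under that extra hypothesis (the paper's acknowledgements allude to precisely this point). The paper circumvents it by a different mechanism: over an excellent base, theorem 15.1.2 of \cite{mixedmotives} identifies $DM_{\mathbb B, X}$ with the category $DM_{\mathrm{qfh},X}$ of motives built from $qfh$-sheaves with rational coefficients, and one then invokes Voevodsky's theorem 3.3.8 in \cite{MR1403354} (see also proposition 9.5.5 in \cite{mixedmotives}), which says that every $qfh$-sheaf is canonically equipped with transfers. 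This produces the transfer structure on the homotopy presheaves without ever constructing a finite-correspondence model of $DM_{\mathbb B, X}$, and it is also not the de Jong-alteration comparison you gesture at. To repair your proof, replace the ``Nisnevich sheaves with transfers'' claim by this $qfh$-descent argument, or else restrict the statement to geometrically unibranch excellent bases.
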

\begin{proof}
	Clearly, it suffices to show that $s_{q}(E)\otimes \spherespectrum _{\mathbb Q}$
	has transfers.  Now, theorem \ref{thm.slices-motives}
	implies that $s_{q}(E)\otimes \spherespectrum _{\mathbb Q}$
	is in $DM_{\mathbb B, X}$.  Since we are assuming that $X$ is excellent, theorem 15.1.2
	in \cite{mixedmotives} implies that $DM_{\mathbb B, X}$ is naturally equivalent to the category
	of motives $DM_{\mathrm{qfh}, X}$ constructed using $qfh$-sheaves with rational coefficients.
	Thus, the result follows from theorem 3.3.8 in \cite{MR1403354} (see also proposition 9.5.5 in \cite{mixedmotives})
	which implies than every $qfh$-sheaf is canonically equipped with transfers.
\end{proof}

\begin{rmk}
		\label{rmk.previous-proofs}
	Theorem \ref{Voevodsky's-conj} was proved using the functoriality of the slice filtration
	in \cite[theorem 4.4]{Pelaez:2010fk} for schemes defined over a field of characteristic zero; 
	on the other hand, if the base scheme $X$ is smooth over a perfect field $k$,
	then theorem \ref{Voevodsky's-conj} holds even with integral coefficients
	(cf. \cite[theorem 3.6.22]{thesis}).
	Both proofs rely on the computation of 
	Levine \cite{MR2365658} and Voevodsky \cite{MR2101286} 
	for the zero slice of the sphere spectrum, as well as on the
	work of R{\"o}ndigs-{\O}stv\emph{\ae}r \cite{MR2435654}.
	
	The analogue of this question for the category of $S^{1}$-spectra
	is studied by Levine in \cite{slices-transfers}.
\end{rmk}

\section*{Acknowledgements}
	The author would like to warmly thank Fr\'ed\'eric D\'eglise for several useful conversations and suggestions, 
	as well as for putting in our hands
	the technical tools from \cite{MR2466188} and \cite{mixedmotives}; and also thank Denis-Charles Cisinski for
	bringing to our attention the argument
	which allowed us to extend theorem \ref{Voevodsky's-conj} from
	geometrically unibranch base schemes to arbitrary excellent schemes.

\bibliography{biblio_osf}
\bibliographystyle{abbrv}

\end{document}